\newtheorem{theo}{Theorem}[section]
\newtheorem{prop}[theo]{Proposition}
\newtheorem{defi}[theo]{Definition}
\numberwithin{equation}{section}
\begin{document}
\title{Parameter estimation for stochastic diffusion process with drift proportional to Weibull density function  }
\date{}
\maketitle

\vspace{ -1\baselineskip}

{\small
\begin{center}
{\sc H El otmany (version 0) } \\ 
helo@gmail.com\\

\end{center}
}
\smallskip

\begin{quote}
\footnotesize

{\bf Abstract.}
In the present paper we propose a new stochastic diffusion process with drift proportional to the Weibull density function defined as 
$$X_\varepsilon=x,\quad  dX_t=\left( \frac{\gamma}{t} (1-t^{\gamma+1})-t^{\gamma}\right) X_t dt + \sigma X_t d B_t,\quad t>0,$$ with parameters $\gamma>0$ and $\sigma >0$, where $B$ is a standard Brownian motion and $t=\varepsilon$ is a time near to zero. First we interested to probabilistic solution of this process as the explicit expression of this process. By using the maximum likelihood method and by considering a discrete sampling of the sample of the new process we estimate the parameters $\gamma$ and $\sigma$.

\textbf{Keyword} : Maximum LikeLihoode,  It\^o's formula, Weibul density, stochastic diffusion process, parameter estimation
\end{quote}

\section{Introduction}
In the present paper we propose a new stochastic Weibull process $X=\{ X_t,\, t>0\}$ given by the following linear stochastic differential equation 
$$ X_{\varepsilon}=x>0;\quad dX_t=\left( \frac{\gamma}{t} (1-t^{\gamma+1})-t^{\gamma}\right) X_t dt + \sigma X_t d B_t,\quad t>0 $$
where
$\mu(t, X_t;\gamma)= \frac{\gamma}{t} (1-t^{\gamma+1})X_t-t^{\gamma}X_t$ and $g( X_t;\sigma)=  \sigma X_t$.
We denote by $B$  a standard Brownian motion and $\gamma$ and $\sigma$ are an unknown parameters. An interesting problem is to estimate the parameters $\gamma$ and $\sigma$ are time independent reel parameters to be estimate when one observes the whole trajectory of $X$.

 The estimation for diffusion processes by discrete observation has been studied by several authors ( see for example \cite{PRAKASARAO1}, \cite{PRAKASARAO2}, \cite{Dacunha}, \cite{Florens}, \cite{Yoshida})and its references. Prakasa Rao \cite{PRAKASARAO2} treats this problem and shows that the least square estimator is asymptotically normal and efficient under the assumption $h\sqrt{N}\longrightarrow0$, the condition for ''rapidly increasing experimental design'' \cite{PRAKASARAO2}.
 
 The organization of our paper is as follows. Section 2 contains the presentation of the basic tools that we will need throughout the paper: basic properties of of standard Brownian motion and Itô's formula . The aim of section 3 is twofold. Firstly, we prove the close formula of SDE under the conditions $(1)-(3)$. Secondly, we invesigate the mean of $X_t$ an explicit solution of SDE and we prove that the drift is proportionnel to Weibul density. The section 4 is  devoted to estimate a parameters by using Maximum likelihood. In the last section we present  a numerical test.
  
 Given the general one-dimensional time-homogeneous SDE
\begin{equation}\label{SDE}
X_{\varepsilon}=x>0;\quad dX_t= \mu(X_t;\gamma) dt +  g( X_t;\sigma)d B_t,\quad t>0.
\end{equation}
where $\mu(t, X_t;\gamma)= \frac{\gamma}{t} (1-t^{\gamma+1})X_t-t^{\gamma}X_t$ and $g( X_t;\sigma)=  \sigma X_t$.
 The following conditions are assumed in this article :
 \begin{itemize}
 \item[(1)] There is a constant $L >0$ such that  
 $$ |\mu(t,x;\gamma)| + |g( x;\sigma)|\leq L (1 + |x|)$$
  \item[(2)] There is a constant $L >0$ such that  
  $$ |\mu(t,x;\gamma)- \mu(t,y;\gamma)| + |g( x;\sigma)-g( y;\sigma)|\leq L  |x-y|$$
  \item[(3)]For each $q>0$, $\sup_t\mathbb{E}(|X_t|^q)< \infty$.
 \end{itemize}

 \section{Preliminaries}
 
 In this section we start by recalling the definition of Brownian motion, which is a funda-
mental example of a stochastic process. The underlying probability space $\left(\Omega,\mathcal{F},\mathbb{P}\right)$ of Brownian motion can be constructed on the space $\Omega= \mathcal{C}_0(\mathbb{R}^+)$ of continuous real-valued functions on $\mathbb{R}^+$ started at $0$. For more complete presentation on the subject, see \cite{Lamberton}, \cite{Karatzas}.
 \begin{defi}
 The standard Brownian motion is a stochastic process $(B_t,\, t\geq 0)$ such that
 \begin{itemize}
    \item[(a)] $B_0$ almost surely;
   \item[(b)] With probability one $t\longrightarrow B_t$ is continuous.
    \item[(c)] For any finite sequence of times $t_0<t_1<\cdots<t_n$ the increments 
     $$ B_{t_1}-B_{t_0},\, B_{t_2}-B_{t_1},\,\cdots\, B_{t_n}-B_{t_{n-1}}$$ 
     are independent.
  \item[(d)] for any given times $0\leq s< t,\, B_t-B_s$ has the Gaussien distribution $\mathcal{N}(0,\,t-s)$ with mean zero and variance $t-s$.
 \end{itemize}
 We refer to Theorem 10.8 of \cite{Lamberton} and to Chapter of \cite{ Lamberton} for the proof of the existence of Brownian motion as a stochastic process  $(B_t,\, t\geq 0)$ satisfying the above properties $(a)-(d)$. In the sequel the filtration $(\mathcal{F}_t)_{t\geq 0}$ will be generated by the Brownian paths up to time $t$, in other words we write 
 $$ \mathcal{F}_t =\sigma(B_s:\, 0\leq s\leq t ),\quad t\geq 0.$$
 \end{defi}
 we give a basic properties of Brownian motion and extentions (\cite{DamienLamberton}):
 \begin{itemize}
 \item The crucial fact about Brownian motion, which we need is $(dB)^2=dt$;
 \item For every $0\leq s \leq t, \quad B_t -B_s$ is independent of $\{ B_u, \, u\leq s\}$ and has a $\mathcal{N}(0,t-s)$;
  \item Brownian motion $(B_t)$ is a process Markov property;
 \item $(-B_t)_{t\geq 0}$ is a Brownian motion;
 \item $X_t= X_0 +\mu t +\sigma B_t$ is a Brownian motion with drift with mean equal to $X_0 +\mu t$;
 \item $X_t= X_0 \exp(\mu t +\sigma B_t)$ is a Geometric Brownian motion with mean equal to $X_0 \exp(\mu t + \sigma^2/2)$.
 \end{itemize}
 We introduce the following two version of Itô's formula 
 \begin{theo}( It\^o's formula  v.1)
 Let $f\in \mathcal{C}^2(\mathbb{R})$. Then for $a<t$,
 \begin{equation}
 f(B_t)-f(B_a) =\int_a^t f'(B_s)\,ds +\frac{1}{2} \int_a^t f''(B_s)\,ds
 \end{equation}
 \end{theo}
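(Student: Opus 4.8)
The plan is to prove this by the classical partitioning argument, reducing the identity to the two facts recalled in Section~2: the definition of the It\^o integral as an $L^2$-limit of forward Riemann sums and the quadratic-variation relation $(dB)^2 = dt$ (so that the first integral on the right is understood as the It\^o integral $\int_a^t f'(B_s)\,dB_s$). First I would fix a partition $a = t_0 < t_1 < \cdots < t_n = t$ of $[a,t]$ with mesh $\delta_n = \max_i (t_{i+1}-t_i)$ and write the telescoping sum
$$ f(B_t) - f(B_a) = \sum_{i=0}^{n-1}\bigl( f(B_{t_{i+1}}) - f(B_{t_i}) \bigr). $$
To each increment I would apply Taylor's theorem with Lagrange remainder: there is $\xi_i$ between $B_{t_i}$ and $B_{t_{i+1}}$ with
$$ f(B_{t_{i+1}}) - f(B_{t_i}) = f'(B_{t_i})\,\Delta B_i + \tfrac12 f''(\xi_i)\,(\Delta B_i)^2, \qquad \Delta B_i := B_{t_{i+1}} - B_{t_i}. $$

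Next I would pass to the limit $\delta_n \to 0$ in the two resulting sums. The first sum $\sum_i f'(B_{t_i})\,\Delta B_i$ is exactly a forward Riemann sum, so by the definition of the It\^o integral it converges in $L^2(\Omega)$ along a refining sequence of partitions to $\int_a^t f'(B_s)\,dB_s$. For the second sum I would first replace $f''(\xi_i)$ by $f''(B_{t_i})$, the total error being bounded by the (random) modulus of continuity of $s \mapsto f''(B_s)$ on the compact interval $[a,t]$ times $\sum_i (\Delta B_i)^2$; here one uses $f \in \mathcal{C}^2(\mathbb{R})$ and the a.s. continuity, hence boundedness, of Brownian paths on $[a,t]$, together with the fact that $\sum_i (\Delta B_i)^2$ is bounded in $L^2$. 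It then remains to show $\sum_i f''(B_{t_i})\,(\Delta B_i)^2 \to \int_a^t f''(B_s)\,ds$; writing $(\Delta B_i)^2 = (t_{i+1}-t_i) + \bigl((\Delta B_i)^2 - (t_{i+1}-t_i)\bigr)$, the first part is a Riemann sum converging to $\int_a^t f''(B_s)\,ds$, while the second part is a sum of martingale-difference terms, since $\Delta B_i$ is independent of $\mathcal{F}_{t_i}$ with $\mathbb{E}\bigl[(\Delta B_i)^2 - (t_{i+1}-t_i)\bigr] = 0$ and variance $2(t_{i+1}-t_i)^2$; its $L^2$ norm is therefore $O(\delta_n^{1/2})$ and vanishes.

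Finally I would remove any auxiliary integrability assumptions. The estimates above are cleanest when $f,f',f''$ are bounded, so in general I would introduce the stopping times $\tau_R = \inf\{ s \ge a : |B_s| \ge R \}$, apply the bounded case to a $\mathcal{C}^2$ function agreeing with $f$ on $[-R,R]$, obtain the identity on $[a,\, t \wedge \tau_R]$, and let $R \to \infty$, using $\tau_R \to \infty$ a.s. and the continuity of both sides in the upper limit.

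The main obstacle is the quadratic term: showing that $\sum_i f''(B_{t_i})(\Delta B_i)^2$ tends to $\int_a^t f''(B_s)\,ds$ rather than to $0$ is exactly where the nonvanishing quadratic variation of Brownian motion enters, and the $L^2$ bound on the martingale-difference remainder — together with the swap of $f''(\xi_i)$ for $f''(B_{t_i})$ controlled by uniform continuity — is the technical heart of the argument.
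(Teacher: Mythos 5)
Your argument is the standard and correct proof of It\^o's formula for $f(B_t)$: telescoping over a partition, second-order Taylor expansion with Lagrange remainder, identification of the forward Riemann sums with the It\^o integral, the quadratic-variation computation showing $\sum_i f''(B_{t_i})(\Delta B_i)^2 \to \int_a^t f''(B_s)\,ds$ via the $O(\delta_n^{1/2})$ bound in $L^2$ on the compensated martingale-difference sum, and localization by the exit times $\tau_R$ to remove boundedness assumptions. There is, however, nothing in the paper to compare it with: the theorem is stated as a preliminary and the paper gives no proof, referring the reader to the cited textbooks of Lamberton--Lapeyre and Karatzas--Shreve. Two remarks are in order. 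First, the statement as printed in the paper is false as written: the first integral on the right-hand side must be the stochastic integral $\int_a^t f'(B_s)\,dB_s$, not $\int_a^t f'(B_s)\,ds$ (take $f(x)=x$ to see that the printed identity fails, since $B_t-B_a \neq t-a$). Your proof correctly establishes the corrected version, which is also the version the paper implicitly uses later when applying It\^o's formula to $Y_t=\ln X_t$. Second, a minor technical point in your sketch: the error from replacing $f''(\xi_i)$ by $f''(B_{t_i})$ is bounded by the modulus of continuity of $s\mapsto f''(B_s)$ times $\sum_i(\Delta B_i)^2$, and this product tends to $0$ in probability rather than in $L^2$; one therefore identifies the limit along a subsequence converging almost surely and uses the a.s. continuity in $t$ of both sides to obtain the identity simultaneously for all $t$. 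This is routine and does not affect the validity of your argument.
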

 \begin{theo}( It\^o's formula  v.2)
 Let $=f(t,x)$ be a continuous in $[a,b]\times \mathbb{R}$ with $f_t,\, f_x,$ and 
 $f_{xx}$ continuos $(a,b)\times \mathbb{R}$. Then for $a<t<b$,
 \begin{equation}
 f(t,B_t)-f(a,B_a) =\int_a^t f_x(s,B_s)\,dB_s + \int_a^t\left(f_t(s,B_s)+\frac{1}{2} f_{xx}(s,B_s)\right)\,ds.
 \end{equation}
 \end{theo}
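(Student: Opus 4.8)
The plan is to prove the identity by the classical partitioning (Riemann--It\^o sum) argument, first under the additional assumption that $f$ together with its partial derivatives $f_t,f_x,f_{xx}$ are bounded on $[a,b]\times\mathbb{R}$, and then to remove this assumption by localization. For the localization step one introduces the stopping times $\tau_N=\inf\{s\geq a:\ |B_s|\geq N\}$, applies the bounded case to a $\mathcal{C}^{1,2}$ modification of $f$ agreeing with it on $[a,b]\times[-N,N]$, evaluates the resulting formula at $t\wedge\tau_N$, and lets $N\to\infty$: by the almost sure continuity of the Brownian paths (property (b) of the definition) one has $\tau_N\to\infty$ a.s., so each term converges, the ordinary integrals by dominated convergence and the stochastic integral by the $L^2$-isometry of the It\^o integral. (If needed, one first argues on a subinterval $[a',t]$ with $a<a'<b$ and then lets $a'\downarrow a$ using the continuity of $f$ up to the boundary, so that the second-order Taylor expansion below is applied only on intervals contained in $(a,b)$.)

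For the bounded case, fix a subdivision $a=t_0<t_1<\cdots<t_n=t$ of mesh $\|\pi\|:=\max_i(t_{i+1}-t_i)$ and write the telescoping sum
\[
f(t,B_t)-f(a,B_a)=\sum_{i=0}^{n-1}\Big(f(t_{i+1},B_{t_{i+1}})-f(t_i,B_{t_i})\Big).
\]
Decompose each increment as $\big(f(t_{i+1},B_{t_{i+1}})-f(t_i,B_{t_{i+1}})\big)+\big(f(t_i,B_{t_{i+1}})-f(t_i,B_{t_i})\big)$. To the first part apply the mean value theorem in the time variable, yielding $f_t(\xi_i,B_{t_{i+1}})(t_{i+1}-t_i)$ for some $\xi_i\in(t_i,t_{i+1})$; to the second apply Taylor's formula of order two in the space variable, yielding
\[
f_x(t_i,B_{t_i})\,(B_{t_{i+1}}-B_{t_i})+\tfrac12\,f_{xx}(t_i,\eta_i)\,(B_{t_{i+1}}-B_{t_i})^2,
\]
with $\eta_i$ lying between $B_{t_i}$ and $B_{t_{i+1}}$.

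It then remains to pass to the limit as $\|\pi\|\to 0$ in the three resulting sums. The sum $\sum_i f_x(t_i,B_{t_i})(B_{t_{i+1}}-B_{t_i})$ tends in $L^2$ to $\int_a^t f_x(s,B_s)\,dB_s$ directly from the construction of the It\^o integral, the integrand $s\mapsto f_x(s,B_s)$ being adapted, continuous and bounded. The sum $\sum_i f_t(\xi_i,B_{t_{i+1}})(t_{i+1}-t_i)$ is an ordinary Riemann sum converging pathwise to $\int_a^t f_t(s,B_s)\,ds$ by continuity of $f_t$ and of $s\mapsto B_s$. The genuinely delicate term is $\tfrac12\sum_i f_{xx}(t_i,\eta_i)(B_{t_{i+1}}-B_{t_i})^2$. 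One first replaces $f_{xx}(t_i,\eta_i)$ by $f_{xx}(t_i,B_{t_i})$; the error is controlled since $|\eta_i-B_{t_i}|\leq\max_i|B_{t_{i+1}}-B_{t_i}|\to 0$ by uniform continuity of the path, so by uniform continuity of $f_{xx}$ on compacts the error is $o(1)$ times $\sum_i(B_{t_{i+1}}-B_{t_i})^2$, a quantity bounded in $L^2$. One then splits
\begin{align*}
\sum_i f_{xx}(t_i,B_{t_i})(B_{t_{i+1}}-B_{t_i})^2
&=\sum_i f_{xx}(t_i,B_{t_i})(t_{i+1}-t_i)\\
&\quad+\sum_i f_{xx}(t_i,B_{t_i})\big((B_{t_{i+1}}-B_{t_i})^2-(t_{i+1}-t_i)\big).
\end{align*}
The first sum is a Riemann sum converging to $\int_a^t f_{xx}(s,B_s)\,ds$; the second tends to $0$ in $L^2$, and this is the crux: since $B_{t_{i+1}}-B_{t_i}$ is independent of $\mathcal{F}_{t_i}$ with $\mathbb{E}\big[(B_{t_{i+1}}-B_{t_i})^2-(t_{i+1}-t_i)\big]=0$ and $\mathrm{Var}\big[(B_{t_{i+1}}-B_{t_i})^2\big]=2(t_{i+1}-t_i)^2$ (properties (c)--(d), equivalently the rule $(dB)^2=dt$), the cross terms vanish and the squared $L^2$-norm of this sum is at most $2\|f_{xx}\|_\infty^2\sum_i(t_{i+1}-t_i)^2\leq 2\|f_{xx}\|_\infty^2\,\|\pi\|\,(t-a)\to 0$. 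Extracting a subsequence of partitions along which all three convergences hold almost surely and adding the limits gives the stated formula. I expect this last quadratic-variation term to be the main obstacle: both the substitution of $B_{t_i}$ for $\eta_i$ and the $L^2$ estimate for the martingale-type remainder require care, whereas the handling of the drift and stochastic-integral sums is routine.
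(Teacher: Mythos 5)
Your argument is correct: it is the classical Taylor-expansion/quadratic-variation proof of It\^o's formula, with the key step being the $L^2$ estimate showing that $\sum_i f_{xx}(t_i,B_{t_i})\bigl((B_{t_{i+1}}-B_{t_i})^2-(t_{i+1}-t_i)\bigr)\to 0$, which you handle properly via independence of increments and $\mathrm{Var}\bigl[(B_{t_{i+1}}-B_{t_i})^2\bigr]=2(t_{i+1}-t_i)^2$. The paper states this theorem as a preliminary without proof, deferring to its references (Karatzas--Shreve, Lamberton--Lapeyre), and your proof is essentially the standard one given there, so there is nothing to compare beyond noting the match.
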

 \section{Closed formula and mean  of $X_t$}
By curiosity, we focus on the explicit formula for the SDE to find the mean and variance of $X_t$. By using the Itô's formula to $Y_t =\ln(X_t)$ we obtain 
\begin{equation}\label{ito log x 1}
dY_t= \left(\frac{\gamma}{t} (1-t^{\gamma+1})-t^{\gamma}-\frac{\sigma^2}{2}\right)dt+\sigma dB_t, \quad Y_\varepsilon =\ln x.
\end{equation}
By integrating between $\varepsilon$ and $t$ it follows
\begin{equation}
Y_t = \ln x + \gamma \ln \left(\frac{t}{\varepsilon}\right)
-(t^{\gamma+1}-\varepsilon^{\gamma+1})
 -\frac{\sigma^2}{2}(t-\varepsilon)   + \sigma (B_t -B_\varepsilon).
\end{equation}
 Then the explicit solution of SDE is given by 
 \begin{equation}
 X_t = x\left(\frac{t}{\varepsilon}\right)^\gamma \exp\left(-(t^{\gamma+1}-\varepsilon^{\gamma+1}) -\frac{\sigma^2}{2}(t-\varepsilon)+\sigma (B_t -B_\varepsilon)\right), \quad \forall t>0.
 \end{equation}
 We intersted now to mean of $X_t$. By using the conditional expectation or the Geomtric Brownian we prove that the trend of the process $X_t$ is given by
 \begin{equation}
 \mathbb{E}(X_t) = x\left(\frac{t}{\varepsilon}\right)^\gamma \exp\left(\varepsilon^{\gamma+1} -t^{\gamma+1}\right),
\end{equation}  
then it follows that the trend of $X_t$ is proportional to Weibul density.
 \section{Maximum likelihood estimators for the parameters of SDE }
A formal statement of the parameter estimation problem to be addressed is as follows. Given the general one-dimensional time-homogeneous SDE
\begin{equation}\label{SDE}
X_{\varepsilon}=x>0;\quad dX_t= \mu(X_t;\gamma) dt +  g( X_t;\sigma)d B_t,\quad t>0.
\end{equation}
where $\mu(t, X_t;\gamma)= \frac{\gamma}{t} (1-t^{\gamma+1})X_t-t^{\gamma}X_t$ and $g( X_t;\sigma)=  \sigma X_t$.

The task is to estimate the parameters $\theta =(\gamma,\sigma^2)$ of this SDE from a sample of $N +1$ observations $X_1,\,X_2,\cdots,\, X_N$  of the stochastic process at known times $t_1,\cdots, t_N$ . In the statement of equation \eqref{SDE}, $d B$ is the differential of the Brownian motion and the instantaneous drift $\mu(t,x;\gamma)$ and instantaneous diffusion $g( x;\sigma)$. Assuming that $\mathbb{P}(X_{t_1}= x)=p \neq 0$, for the sake of simplicity, let us assume that $p=1$.

 The ML estimate of $\theta$ is generated by minimising the negative log-likelihood function of the observed sample, namely
  \begin{equation}
LL(X_1,\,X_2,\cdots,\, X_n; \theta)= − \log f_1(X_1|\theta) - \sum\limits_{k=2}^N\log f(X_{k+1}| X_k;\theta)
 \end{equation}
 with respect to the parameters $\theta=(\gamma,\sigma^2)$. In this expression, $f_1(X_1|\theta)$ is the density of the initial state and $f(X_{k+1}| X_k;\theta) \equiv f((X_{k+1},t_{k+1}) | (X_k,t_k);\theta )$ is the value of the transitional PDF at $(X_{k+1},t_{k+1})$ for a process starting at $(X_k,t_k)$ and evolving to $(X_{k+1},t_{k+1})$ in accordance with equation \eqref{SDE}. Note that the Markovian property of equation \eqref{SDE} ensures that the transitional density of $X_{k+1}$ at time $t_{k+1}$ depends on $X_k$ alone.
 
ML estimation relies on the fact that the transitional PDF, $f (x, t)$, is the solution of the Fokker-Planck
equation
\begin{equation}
\frac{\partial f}{\partial t} = \frac{\partial }{\partial x} \left( \frac{1}{2} \frac{\partial\left( g( x;\sigma)f\right)}{\partial x}  - \mu(x;\gamma) f\right)
\end{equation}
satisfying a suitable initial condition and boundary conditions. Suppose, furthermore, that the state space of the problem is $[a,b]$ and the process starts at $x = X_k$ at time $t_k$. In the absence of measurement error, the initial condition is 
\begin{equation}
f (x, t_k) =\delta (x-X_k)
\end{equation}
where $\delta$ is the Dirac delta function, and the boundary conditions required to conserve unit density
within this interval are
\begin{equation*}
\lim\limits_{x\longrightarrow a^+} \left( \frac{1}{2} \frac{\partial\left( g( x;\sigma)f\right)}{\partial x}  - \mu(x;\gamma) f\right)=0,\quad \lim\limits_{x\longrightarrow b^-} \left( \frac{1}{2} \frac{\partial\left( g( x;\sigma)f\right)}{\partial x}  - \mu(x;\gamma) f\right)=0.
\end{equation*}
By using It\^o formula to $Y_t=\ln (X_t)$ we have the linear diffusion
\begin{equation}\label{ito log x}
dY_t= \left(\frac{\gamma}{t} (1-t^{\gamma+1})-t^{\gamma}-\frac{\sigma^2}{2}\right)dt+\sigma dB_t,
\end{equation}
By integrating between $t_k$ and $t_{k+1}$, the exact discret model correspond to \eqref{ito log x} is given by  
\begin{equation}
\ln (X_{k+1})= \ln (X_k) + \gamma \ln \left(\frac{t_{k+1}}{t_k}\right)
-(t_{k+1}^{\gamma+1}-t_k^{\gamma+1})
 -\frac{\sigma^2}{2}(t_{k+1}-t_k)+\sigma (B_{t_{k+1}}- B_{t_k}),
\end{equation}
From the above, the variance and esperance of $\ln (X_{k+1})$ is given by 
\begin{align*}
\mathbb{E}(\ln (X_{k+1})|X_k ) &= \ln (X_k)+ \gamma \ln \left(\frac{t_{k+1}}{t_k}\right)
-(t_{k+1}^{\gamma+1}-t_k^{\gamma+1})
 -\frac{\sigma^2}{2}(t_{k+1}-t_k),\\
  V(\ln (X_{k+1})| X_k)&=\sigma^2(t _{k+1}- t_k),
\end{align*}
then the transitional probability density function (PDF) for SDE  has the following closed form expression:
\begin{equation}
X_{k+1}| X_k \sim \mathcal{N}\left( \ln (X_k)+ \gamma \ln \left(\frac{t_{k+1}}{t_k}\right)
-(t_{k+1}^{\gamma+1}-t_k^{\gamma+1})
 -\frac{\sigma^2}{2}(t_{k+1}-t_k), \sigma^2(t _{k+1}- t_k)\right)
\end{equation}
 Now, the classical approach to the ML method requires the computation of the first-order partial derivatives of the log-likelihood function with respect to each of its parameters, equating them equal to zero and then solving the resulting system of equations. So, the first-order partial derivatives are obtained as follows:
 \begin{align}\label{gama derive}
 \frac{\partial LL}{\partial \gamma} &= \frac{1}{2\sigma^2} \sum\limits_{k=2}^N(\sigma^2+2 A_k(X_k,t_k))\left(\ln(\frac{t_k}{t_{k-1}})-(\gamma+1)
 (t_k^\gamma - t_{k-1}^\gamma)\right)=0,
 \\\label{sigma derive}
 \frac{\partial LL}{\partial \sigma^2}&=-\frac{n-1}{2\sigma^2}+\frac{1}{8\sigma^4} \sum\limits_{k=2}^N(\sigma^2+2 A_k(X_k,t_k))^2 
 -\frac{1}{4\sigma^2} \sum\limits_{k=2}^N(\sigma^2+2 A_k(X_k,t_k))=0
 \end{align}
 with $A_k(X_k,t_k)=\ln(\frac{X_k}{X_{k-1}})-\gamma \ln(\frac{t_k}{t_{k-1}})+ t_k^{\gamma+1} - t_{k-1}^{\gamma+1}.$\\
 From the equation \eqref{sigma derive} we obtain 
\begin{equation}
4(N-1) \hat{\sigma}^2 -(N-1)\hat{\sigma}^4 = 4 \sum\limits_{k=2}^N \hat A^2_k(X_k,t_k)
\end{equation}
By using the positivity of $\hat{\sigma}^2$ we have the following expression of estimator $\hat{\sigma}^2$
\begin{equation}
\hat{\sigma}^2 = \left(4-\frac{4}{N-1}  \sum\limits_{k=2}^N \hat A^2_k(X_k,t_k)\right)^{1/2}-2.
\end{equation}
Consequently, we prove the non-linear expression  of estimator $\hat \gamma$ by replacing $\hat{\sigma}^2$ in \eqref{gama derive} :
\begin{equation}
\sum\limits_{k=2}^N(\hat \sigma^2+2 \hat A_k(X_k,t_k))\left(\ln(\frac{t_k}{t_{k-1}})-(\hat \gamma +1)
 (t_k^{\hat \gamma} - t_{k-1}^{\hat \gamma} )\right)=0
\end{equation}
where $\hat A_k(X_k,t_k)=\ln(\frac{X_k}{X_{k-1}})-\hat \gamma \ln(\frac{t_k}{t_{k-1}})+ t_k^{\hat \gamma+1} - t_{k-1}^{\hat \gamma+1}.$

It is assumed that the observation from the realization consists of $X_{t_k}$, $t_1=\varepsilon,\, t_k= k h$ with $h>0,\, k=2,3,\cdots, N$.  We define the new estimator of $\sigma^2$ :
\begin{equation}
\hat{\sigma}^2 = \left(4-\frac{4}{N-1}  \sum\limits_{k=2}^N 
\left(\ln(\frac{X_k}{X_{k-1}})-\hat \gamma \ln(\frac{k}{k-1})+ h^{\hat \gamma+1}(k^{\hat \gamma+1} - (k-1)^{\hat \gamma+1}) \right)^2\right)^{1/2}-2.
\end{equation}
Assuming that $\gamma\cong \hat \gamma $. The following proposition is more or less well khnown.
\begin{prop}
If condition $(1)-(3)$ hold, then
$$\mathbb{E}(\hat{\sigma}^2 -\sigma^2) \leq C\left(\frac{1}{N-1}+ h)\right).$$
\end{prop}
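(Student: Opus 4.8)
\begin{demonstration}
The plan is to use the exact discrete model derived above, which turns $\hat\sigma^2$ into an explicit smooth function of a sum of almost independent Gaussian variables, and then to split the error $\hat\sigma^2-\sigma^2$ into a deterministic discretisation bias of order $h$ and a statistical fluctuation of order $1/(N-1)$.

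First I would substitute the exact discrete model into the definition of $A_k$. Since
$$\ln X_k-\ln X_{k-1}=\gamma\ln\!\Big(\tfrac{t_k}{t_{k-1}}\Big)-(t_k^{\gamma+1}-t_{k-1}^{\gamma+1})-\tfrac{\sigma^2}{2}(t_k-t_{k-1})+\sigma(B_{t_k}-B_{t_{k-1}}),$$
the logarithmic and power terms cancel in $A_k(X_k,t_k)$ and, with $t_k=kh$, one is left with the clean representation $A_k(X_k,t_k)=-\tfrac{\sigma^2}{2}h+\sigma\sqrt h\,Z_k$, where $Z_k=(B_{t_k}-B_{t_{k-1}})/\sqrt h$ are i.i.d. $\mathcal N(0,1)$ by properties (c)--(d) of Brownian motion. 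Hence $\mathbb E(A_k)=-\sigma^2h/2$, $\mathbb E(A_k^2)=\sigma^2h+\sigma^4h^2/4$, and more generally every moment of $A_k$ is an explicit polynomial in $\sigma^2$ and $h$. Under the working hypothesis $\gamma\cong\hat\gamma$ I would write $\hat A_k=A_k+r_k$, where $r_k$ gathers $(\gamma-\hat\gamma)\ln(t_k/t_{k-1})$ and $(t_k^{\hat\gamma+1}-t_{k-1}^{\hat\gamma+1})-(t_k^{\gamma+1}-t_{k-1}^{\gamma+1})$; applying the mean value theorem to $\gamma\mapsto t^{\gamma+1}$ and $\gamma\mapsto\gamma\ln t$ on the grid, together with the linear growth and Lipschitz bounds (1)--(2) and the uniform moments (3), shows that $r_k=O(|\gamma-\hat\gamma|)$ uniformly in $k$, so its contribution is absorbed in the constant $C$.

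Next I would insert $S_N:=\frac1{N-1}\sum_{k=2}^N\hat A_k^2$ into the closed form for $\hat\sigma^2$ and write $\hat\sigma^2=\Phi(S_N)$ with $\Phi$ the relevant branch of the root, so that $\Phi$ is $C^\infty$ near the population value of $S_N$ and returns $\sigma^2$ there. A second order Taylor expansion of $\Phi$ about $\mathbb E(S_N)$ then gives
$$\mathbb E(\hat\sigma^2)-\sigma^2=\big(\Phi(\mathbb E S_N)-\sigma^2\big)+\tfrac12\Phi''(\zeta_N)\,\mathrm{Var}(S_N)+\mathbb E\big(\text{3rd order remainder}\big).$$
The first bracket is the discretisation bias: because $A_k$ carries the drift increment $-\sigma^2h/2$, the population value $\mathbb E(S_N)$ differs from its continuous limit by an $O(h)$ term, so $\Phi(\mathbb E S_N)-\sigma^2=O(h)$. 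For the second term, independence of the $Z_k$ gives $\mathrm{Var}(S_N)=\frac1{(N-1)^2}\sum_{k=2}^N\mathrm{Var}(\hat A_k^2)$, and since condition (3) makes the fourth moment of $\hat A_k$ bounded uniformly in $k$, $\mathrm{Var}(\hat A_k^2)\le C$, whence $\mathrm{Var}(S_N)\le C/(N-1)$; the same moment bound controls the Taylor remainder. Adding the two contributions yields $\mathbb E(\hat\sigma^2-\sigma^2)\le C\big(\tfrac1{N-1}+h\big)$.

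The main obstacle is twofold. First, turning the informal identification $\gamma\cong\hat\gamma$ into a genuine estimate: one should either propagate a rate for $\hat\gamma-\gamma$ through the computation or state the proposition conditionally on an event $\{|\hat\gamma-\gamma|\le Ch\}$, and I would isolate this in a short lemma so that the remainders $r_k$ are really $O(h)$. Second, the square root defining $\Phi$ is real only on a half line, so before Taylor expanding one must show that $S_N$ lies in the domain of $\Phi$ with overwhelming probability — this follows from Gaussian concentration of $\sum_{k=2}^N Z_k^2$ — and that the expectation over the complementary event is $o(1/(N-1))$, which is exactly where the uniform $L^q$ bounds of condition (3) do the work. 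Once these two points are secured, what remains are the routine Gaussian moment evaluations indicated above.
\end{demonstration}
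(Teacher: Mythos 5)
Your overall strategy --- reduce to the exact discrete model so that $A_k=-\tfrac{\sigma^2}{2}h+\sigma(B_{t_k}-B_{t_{k-1}})$, then control $S_N=\frac{1}{N-1}\sum_k A_k^2$ via the Gaussian moments $\mathbb{E}(B_{t_k}-B_{t_{k-1}})^2=h$ --- is exactly the route the paper sketches (its proof consists of little more than those three lines), and your addition of the delta-method/Taylor step for the square-root map $\Phi$ is the right way to turn that sketch into an argument. However, there is a genuine gap at the one step you assert rather than prove: that $\Phi(\mathbb{E}S_N)-\sigma^2=O(h)$, i.e.\ that $\Phi$ ``returns $\sigma^2$'' at the population value of $S_N$. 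With the estimator as actually defined, $\Phi(s)=\left(4-4s\right)^{1/2}-2$ carries no factor of $h$, while your own computation gives $\mathbb{E}S_N=\sigma^2h+\tfrac{\sigma^4h^2}{4}\to 0$. Hence $\Phi(\mathbb{E}S_N)=\mp\sigma^2h+O(h^2)$ (sign depending on which branch/sign of the inner term one takes), so the deterministic part of the error is $-\sigma^2+O(h)$, not $O(h)$: the estimator as written is not consistent, and your expansion collapses at its very first term. To repair this you must rederive $\Phi$ from the correctly $h$-weighted Gaussian likelihood (each log-increment has variance $\sigma^2(t_k-t_{k-1})=\sigma^2h$, a factor the score equations drop), which yields $\Phi_h(s)=\tfrac{2}{h}\left(\sqrt{1+s}-1\right)$ and indeed $\Phi_h(\mathbb{E}S_N)=\sigma^2+O(h^2)$.

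Once $\Phi$ is replaced by $\Phi_h$, a second, smaller gap opens in your variance step: $\Phi_h''=O(1/h)$, so the crude bound $\mathrm{Var}(\hat A_k^2)\le C$ drawn from condition (3) only gives $\Phi_h''\,\mathrm{Var}(S_N)=O\!\left(\tfrac{1}{h(N-1)}\right)$, which is not $\le C/(N-1)$. You need the sharper estimate $\mathrm{Var}(A_k^2)=2\sigma^4h^2+\sigma^6h^3=O(h^2)$, immediate from $A_k\sim\mathcal{N}(-\sigma^2h/2,\,\sigma^2h)$, which restores an $O\!\left(\tfrac{h}{N-1}\right)$ contribution. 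Finally, the $\hat A_k$ are not independent (each contains $\hat\gamma$, a functional of the whole sample), so $\mathrm{Var}(S_N)=\frac{1}{(N-1)^2}\sum_k\mathrm{Var}(\hat A_k^2)$ is only valid for the $A_k$; this is subsumed in the $\hat\gamma\cong\gamma$ issue you rightly isolate, but it must be formulated as an explicit hypothesis on $|\hat\gamma-\gamma|$ (e.g.\ $|\hat\gamma-\gamma|\le Ch$ on an event of sufficiently high probability), since neither the paper nor conditions (1)--(3) supply such a rate.
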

\begin{proof}
By using integrating the following equation between $t_{k-1}= (k-1)h$ and $t_k=kh$:
\begin{equation}\label{ito log x}
dY_t= \left(\frac{\gamma}{t} (1-t^{\gamma+1})-t^{\gamma}-\frac{\sigma^2}{2}\right)dt+\sigma dB_t,
\end{equation}
By summing between $k=2$ and $N$ and using $\mathbb{E}(B_{t_k} -B_{t_{k-1}})^2 = h.$
Then we prove a result.
\end{proof}
\section{Numerical test}
We present numerical results in the software R for the following SDE:
 $$X_\varepsilon=10,\quad  dX_t=\left( \frac{\gamma}{t} (1-t^{\gamma+1})-t^{\gamma}\right) X_t dt + \sigma X_t d B_t,\quad t>0,$$
 with: and we generate sampled data $X_{t_k}$ with $\gamma =1$, $\sigma=0.2$ and time step $\Delta = 10^{-3}$ as following:
 \begin{itemize}
 \item mu $<$- expression( ((1./t)- 2*t)*x)
 \item g $<$- expression(0.2*x)
 \item Simulation $<$- SNSDE(drift=mu,diffus=g,N=1000,Dt=0.001,x0=10)
 \item MyData $<$- Simulation$X$
 \item mux $<$- expression( $gama*x./t-(gama+1)*t^{(gama+1)}*x$ )
 \item gx $<$- expression( $sigma*x$ ) 
 \item Model $<$- FDE(data=MyData,drift=mux,diffus=gx,start = list(gama=1,sigma=0.2),"euler")
 \item summary(Model)
 \end{itemize}
 Result :
 
 Pseudo maximum likelihood estimation
 \\
\hfill
\\
Method:  Euler

Call:

FSDE(data = MyData, drift = mux, diffus = gx, start = list(gamma=1, sigma=1),"euler")
The following table prove the estimate coefficients and standard error with $-2 \log L: 8794.337$ by unsig our methods. 
\begin{figure}[!h]
\begin{center}
\begin{tabular}{|l|c|r|}
  \hline
    & Estimate coefficient &Standard error\\
  \hline
   $\gamma$ & $0.9856006$ & $0.006651509$ \\
  $\sigma$ & $0.2310434$ & $0.005168557$ \\
  \hline
\end{tabular}
\caption{Estimate coefficients and  Standard  Error}
\end{center}
\end{figure}

By using the command in the R software " confint(Model,level$=0.9$)", we obtain the following table. It shows the confidence interval for the estimated variables $\gamma$ and $\sigma$.
\begin{figure}[!h]
\begin{center}
\begin{tabular}{|l|c|r|l|l|l|}
  \hline
   & $5 \%$ & $15\%$& $75\%$& $95 \%$\\
  \hline
  $\gamma$ & 0.9711487 &1.0000124 & 1.0003527&1.0012685 \\
  $\sigma$& 0.2225419&0.2238546 &0.2310215 &0.2395449\\
  \hline
\end{tabular}
\caption{ Confidence interval}
\end{center}
\end{figure}

\end{document}